\newtheorem{theorem}{Theorem}[section]
\newtheorem{lemma}[theorem]{Lemma}
\newtheorem{corollary}[theorem]{Corollary}
\theoremstyle{definition}
\newtheorem{example}[theorem]{Example}
\newtheorem{algorithm}[theorem]{Algorithm}
\newtheorem{remark}[theorem]{Remark}
\begin{document}

\title[Defining relations of low degree]
{Defining Relations of Low Degree of Invariants
of Two $4 \times 4$ Matrices}

\author[Vesselin Drensky and Roberto La Scala]
{Vesselin Drensky and Roberto La Scala}
\address{Institute of Mathematics and Informatics,
Bulgarian Academy of Sciences,
Acad. G. Bonchev Str., Block 8, 1113 Sofia, Bulgaria}
\email{drensky@math.bas.bg}

\address{Dipartimento di Matematica,
Universit\`a di Bari, Via E. Orabona 4,
70125 Bari, Italia}
\email{lascala@dm.uniba.it}

\thanks
{The research of the first author was partially supported by Grant
MI-1503/2005 of the Bulgarian National Science Fund.}
\thanks{The research of the second was partially supported
by Universit\`a di Bari and MIUR}

\subjclass[2000]
{Primary: 16R30; Secondary: 16S15, 13A50, 15A72}
\keywords{generic matrices, matrix invariants,
trace algebras, defining relations}

\begin{abstract}
The trace algebra $C_{nd}$ over a field of characteristic 0
is generated by all traces of products of
$d$ generic $n\times n$ matrices, $n,d\geq 2$.
Minimal sets of generators of $C_{nd}$ are known for $n=2$ and
3 for any $d$ and for $n=4$ and 5 and $d=2$. The explicit defining relations
between the generators are found for $n=2$ and any $d$ and for $n=3$, $d=2$ only.
Defining relations of minimal degree for $n=3$ and any $d$ are also known.
The minimal degree of the defining relations of any homogeneous
minimal generating set of $C_{42}$ is equal to 12. Starting with the generating set
given recently by Drensky and Sadikova, we have determined all relations of degree $\leq 14$.
For this purpose we have developed further algorithms
based on representation theory of the general linear
group and easy computer calculations with standard functions of Maple.
\end{abstract}

\maketitle

\section*{Introduction}

Let $K$ be any field of characteristic 0. All vector spaces, tensor products,
algebras considered in this paper are over $K$. Let
$X_i=\left(x_{pq}^{(i)}\right)$, $p,q=1,\ldots,n$, $i=1,\ldots,d$,
be $d$ generic $n\times n$ matrices. We consider the
pure (or commutative) trace algebra $C_{nd}$
generated by all traces of products
$\text{\rm tr}(X_{i_1}\cdots X_{i_k})$.
It coincides with the algebra of invariants of the general
linear group $GL_n=GL_n(K)$ acting by simultaneous conjugation on
$d$ matrices of size $n\times n$. The
algebra $C_{nd}$ is finitely generated.
An upper bound for the degree of the trace monomials sufficient to generate $C_{nd}$
is given in terms of the Nagata-Higman theorem in the theory of PI-algebras.
The defining relations of $C_{nd}$ are
described by the Razmyslov-Procesi theory \cite{R, P} in the language
of ideals of the group algebras of symmetric groups. For a background
on the algebras of matrix invariants see e.g. \cite{F, DF} and
for computational aspects of the theory see \cite{D2}.

Explicit minimal sets of generators of $C_{nd}$ are known
for $n=2$ and 3 for any $d$, and $n=4$ and 5 for $d=2$ only.
The exact upper bound of the degree $k\leq N(n)$ of the trace polynomials
$\text{\rm tr}(X_{i_1}\cdots X_{i_k})$ sufficient to generate $C_{nd}$ is
$N(2)=3$, $N(3)=6$, and $N(4)=10$.
Even less is known for the defining relations between these
minimal sets of generators. For details on the explicit form of the defining relations
for $n=2$, $d\geq 2$ see e.g. \cite{DF}.
For $n=3$, $d=2$, a minimal generating set of $C_{32}$ consisting of 11 trace monomials
of degree $\leq 6$ was found by Teranishi \cite{T1}.
He also calculated the Hilbert (or Poincar\'e) series of $C_{32}$.
It follows from his description that, with respect to these generators,
$C_{32}$ has a single defining relation of degree 12.
The explicit form of the relation was found by Nakamoto \cite{N}, over $\mathbb Z$,
with respect to a slightly different system of generators.
Abeasis and Pittaluga \cite{AP} found
a system of generators of $C_{3d}$, for any $d\geq 2$, in terms of
representation theory of the symmetric and general linear groups,
in the spirit of its usage in theory of PI-algebras.
Aslaksen, Drensky and Sadikova \cite{ADS}
gave the defining relation of $C_{32}$ with respect to the generators
from \cite{AP}. For $n=3$ and $d>2$ the defining relations of $C_{3d}$ seem to be
very complicated. Recently, Benanti and Drensky \cite{BD} have shown that
for all $d>2$ the minimal degree of the defining relations of $C_{3d}$
is equal to 7 and have found explicitly these relations with
respect to the generators from \cite{AP}.
For $d=3$ they have given also the relations of degree 8,
using additional information
from the Hilbert series of $C_{33}$ calculated by
Berele and Stembridge \cite{BS}.
Independently, the defining relations of the algebra $C_{33}$
have been studied in the recent master thesis of Hoge \cite{H}.
Using representation theory of general linear groups and computer calculations with Maple,
as in \cite{ADS} and \cite{BD}, he developed a general algorithm
and found the relations of degree 7 and some of the relations of degree 8.

For $C_{42}$, a set of generators was found by Teranishi \cite{T1, T2}
and a minimal set by Drensky and Sadikova \cite{DS}, in terms of the approach in \cite{AP}.
Djokovi\'{c} \cite{Dj} gave another minimal set of 32 generators
of $C_{42}$ consisting of trace monomials only (he found also a minimal set
of 173 generators of $C_{52}$).
Any homogeneous minimal generating set $\{u_i\mid i=1,\ldots,32\}$
of $C_{42}$ consists of $g_i$ elements of degree
$i=1,2,\ldots,10$, where
\begin{equation}\label{dimensions of generators}
\begin{array}{c}
g_1=2,\quad g_2=3,\quad g_3=4,\quad g_4=6,\quad g_5=2,\\
\\
g_6=4,\quad g_7=2,\quad g_8=4,\quad g_9=4,\quad g_{10}=1.\\
\end{array}
\end{equation}
Hence $C_{42}$ is isomorphic to the factor algebra $K[y_1,\ldots,y_{32}]/I$.
Defining $\text{\rm deg}(y_i)=\text{\rm deg}(u_i)$,
the ideal $I$ is homogeneous.
The comparison of the Hilbert series of $C_{42}$ calculated by
Teranishi \cite{T2} (with some typos) and corrected by
Berele and Stembridge \cite{BS},
with the Hilbert series of $K[y_1,\ldots,y_{32}]$ gives that
any homogeneous minimal system of generators of the ideal $I$ contains
no elements of degree $\leq 11$ and 5 elements of degree 12, see \cite{DS}.
The purpose of the present paper is to find the explicit form of the defining relations
of minimal degree for $C_{42}$, with respect to
the generating set in \cite{DS}. We have performed similar computations also for higher degrees,
up to 14.
The proofs are based on representation theory of $GL_2$
combined with computer calculations with Maple and develop further ideas
of \cite{ADS, DS}.
In particular, we have found a way to write the defining relations in a compact form.
Our methods are quite general and can be successfully
used for further investigation of generic trace algebras
and other algebras close to them.

Having in hand some defining relations of $C_{42}$, we face the problem what is their meaning.
We suggest the following point of view. It is known that the algebra $C_{nd}$ is Cohen-Macaulay.
It has a homogeneous system of parameters $u_1,\ldots,u_p$ which are algebraically independent
and $C_{nd}$ is a finitely generated free $K[u_1,\ldots,u_p]$-module. Here $p=(d-1)n^2+1$ is the
Krull dimension of $C_{nd}$. In our case the homogeneous system of parameters of $C_{42}$ consists
of 17 of the 32 generators $u_i$ of $C_{42}$, say $u_1,\ldots,u_{17}$,
and the free $K[u_1,\ldots,u_{17}]$-module $C_{42}$ is freely generated by a finite set of products
\begin{equation}\label{CM-algebra}
\{u_{18}^{a_{18}}\cdots u_{32}^{a_{32}}\mid (a_{18},\ldots a_{32})\in A\}
\end{equation}
for some set of indices $A$. The form of the relations of low degree which we have found
agrees with the fact that every product $u_{18}^{b_{18}}\cdots u_{32}^{b_{32}}$ can be written
as a linear combination of the elements from (\ref{CM-algebra}) with coefficients from
$K[u_1,\ldots,u_{17}]$ and gives some restrictions on the indices $(a_{18},\ldots a_{32})$.

\section{Preliminaries}

Till the end of the paper we fix $n=4$ and $d=2$ and denote by $X,Y$
the two generic $4\times 4$ matrices. We shall denote $C_{42}$ by $C$.
It is a standard trick to replace the generic matrices with generic traceless
matrices. We express $X$ and $Y$ in the form
\[
X=\frac{1}{4}\text{\rm tr}(X)e+x,\quad Y =\frac{1}{4}\text{\rm tr}(Y)e+y,
\]
where $e$ is the identity $4\times 4$ matrix and $x,y$ are
generic traceless matrices.
Then
\begin{equation}\label{replacing with traceless matrices}
C\cong K[\text{\rm tr}(X),{\rm tr}(Y)]\otimes C_0,
\end{equation}
where the algebra $C_0$ is generated by the traces of products
$\text{\rm tr}(z_1\cdots z_k)$, $z_i=x,y$, $2\leq k\leq 10$. Hence the problem for
the generators and the defining relations of $C$ can be replaced by a similar problem for
$C_0$.

As in the case of ``ordinary'' generic matrices, up to similarity
we may replace $x$ by a generic traceless diagonal matrix.
Although not essential, this results in a simplification from a computational
point of view. In fact, one of the worst drawback when computing with traces
of polynomials in generic matrices is that these are commutative polynomials
with a very high number of monomials. Then, without loss of generality we can
fix the two generic traceless matrices as
\[
x=\left( \begin{array}{cccc}
                x_{11} & 0 & 0 & 0\\
                0 & x_{22} & 0 & 0\\
                0 & 0 & x_{33} & 0 \\
                0 & 0 & 0 & -(x_{11}+x_{22}+x_{33})\\
             \end{array} \right),
\]
\[
y=\left( \begin{array}{cccc}
                y_{11} & y_{12} & y_{13} & y_{44} \\
                y_{21} & y_{22} & y_{23} & y_{24} \\
                y_{31} & y_{32} & y_{33} & y_{34} \\
                y_{41} & y_{42} & y_{43} & -(y_{11}+y_{22}+y_{33})\\
             \end{array} \right)
\]
We summarize now the necessary background on representation theory of general
linear groups $GL_d$. We shall state everything for $d = 2$ only.
See \cite{M, W} for details on polynomial representations of $GL_d$ and \cite{D1}
for their applications to PI-algebras.
The group $GL_2 = GL_2(K)$ acts in a canonical way on the vector space with
basis $\{x,y\}$ and this action induces a diagonal action on
the free associative algebra $K\langle x,y\rangle$:
\[
g(z_1\cdots z_k)=g(z_1)\cdots g(z_k),\quad z_i=x,y,\quad g\in GL_2.
\]
The action of $GL_2$ on $K\langle x,y\rangle$ induces an action on
the algebras $C$ and $C_0$. For $C_0$ it is given by
\[
g(\text{tr}(z_1\cdots z_k))=\text{tr}(g(z_1)\cdots g(z_k)),\quad
z_i=x,y,\quad g\in GL_2.
\]
The $GL_2$-module $K\langle x,y\rangle$ is a direct sum of
irreducible polynomial modules, described
in terms of partitions $\lambda=(\lambda_1,\lambda_2)$. We denote
by $W(\lambda)$ the corresponding $GL_2$-module.

The $GL_2$-submodules and factor modules $W$ of $K\langle x,y\rangle$
inherit its natural bigrading which counts the entries of $x$ and $y$
in each monomial. We denote by $W^{(p,q)}$ the corresponding homogeneous component
of degree $p$ and $q$ in $x$ and $y$, respectively.
The formal power series
\[
H(W,t,u)=\sum_{p,q\geq 0}\text{\rm dim}(W^{(p,q)})t^pu^q
\]
is called the Hilbert series of $W$. The Hilbert series of $W(\lambda)$ is the Schur
function $S_{\lambda}(t,u)$ which, in the case of two variables, has the simple form
\begin{equation}\label{definition of Schur function}
S_{\lambda}(t,u)=(tu)^{\lambda_2}(t^{\lambda_1-\lambda_2}+t^{\lambda_1-\lambda_2-1}u+
\cdots+tu^{\lambda_1-\lambda_2-1}+u^{\lambda_1-\lambda_2}).
\end{equation}
The Hilbert series of $W$ plays the role of its character. The module
$W(\lambda)$ participates in $W$ with multiplicity $m(\lambda)$, i.e.,
\[
W=\bigoplus (W(\lambda))^{\oplus m(\lambda)},\quad
m(\lambda)\in {\mathbb N}\cup\{0\},
\]
if and only if
\[
H(W,t,u)=\sum m(\lambda)S_{\lambda}(t,u).
\]

Let $C_0^+=\omega(C_0)$ be the augmentation ideal of $C_0$.
It consists of all trace polynomials $f(x,y)\in C_0$ without constant terms,
i.e., satisfying the condition $f(0,0) = 0$. Any minimal system of generators
of $C_0$ lying in $C_0^+$ forms a basis of the vector space $C_0^+$ modulo $(C_0^+)^2$.
Abeasis and Pittaluga \cite{AP} suggested to fix the minimal system of generators
of $C_{nd}$ in such a way that it spans a $GL_d$-module $G$. Then $C_{nd}$ is a homomorphic
image of the symmetric algebra $K[G]=\text{Sym}(G)$ and the defining relations correspond to the generators
of the kernel of the natural homomorphism $K[G]\to C_{nd}$.
Drensky and Sadikova \cite{DS} found that the minimal $GL_2$-module of generators of $C_{42}$
is decomposed as
\begin{equation}\label{decomposition of generators}
\begin{array}{c}
G=W(1,0)\oplus W(2,0)\oplus W(3,0)\oplus W(4,0)\oplus W(2,2)\\
\\
\oplus W(3,2)\oplus W(4,2)\oplus W(3,3)\oplus W(4,3)\\
\\
\oplus W(5,3)\oplus W(4,4)\oplus W(6,3)\oplus W(5,5).\\
\end{array}
\end{equation}
Hence the minimal generating $GL_2$-module $G_0$ of $C_0$ is the direct sum of those modules
in (\ref{decomposition of generators}) which are different from $W(1,0)$.
For $\lambda=(\lambda_1,\lambda_2)\not=(5,5)$, one may choose as a generator of
$W(\lambda_1,\lambda_2)\subset G_0$ the canonical element
\begin{equation}\label{hwv for general case}
w_{\lambda}(x,y)=\text{\rm tr}((xy-yx)^{\lambda_2}x^{\lambda_1-\lambda_2}).
\end{equation}
A generator of $W(5,5)$ may be chosen as
\begin{equation}\label{hwv for 55}
w_{(5,5)}(x,y)=\text{\rm tr}((xy-yx)^3(x^2y^2-xy^2x-yx^2y+y^2x^2)).
\end{equation}
In \cite{DS} it corresponds to the standard tableau
\[
\left[
\begin{array}{@{\hskip 3pt}c@{\hskip 3pt}c@{\hskip 3pt}c
              @{\hskip 3pt}c@{\hskip 3pt}c
              @{\hskip 3pt}}
1 & 3 & 5 & 7 & 8  \\
2 & 4 & 6 & 9 & 10 \\
\end{array}
\right].
\]
Since $C_0\cong K[G_0]/J$ for an ideal $J$ which is also graded,
the difference of the Hilbert series of $K[G_0]$ and $C_0$ gives the Hilbert series
of $J$. By \cite{DS}, the Hilbert series of $J$ is
\[
H(J,t,u)=H(C_0,t,u)-H(K[G_0],t,u)=
(S_{(7,5)}(t,u)+2S_{(6,6)}(t,u))
\]
\[
+(S_{(8,5)}(t,u)+2S_{(7,6)}(t,u))
+(2S_{(9,5)}(t,u)+6S_{(8,6)}(t,u)+2S_{(7,7)}(t,u))
\]
\[
+(2S_{(10,5)}(t,u)+9S_{(9,6)}(t,u)+7S_{(8,7)}(t,u))+\cdots.
\]
Hence, the $GL_2$-modules $R_{12}, R_{13}$, and $R_{14}$
of the defining relations of degree
12, 13, and 14 are, respectively,
\begin{equation}\label{decomposition of relations}
\begin{array}{c}
R_{12} = W(7,5)\oplus 2W(6,6),\\
\\
R_{13} = W(8,5)\oplus 2W(7,6),\\
\\
R_{14} = 2W(9,5)\oplus 6W(8,6)\oplus 2W(7,7).\\
\end{array}
\end{equation}

Any submodule $W(\lambda)=W(\lambda_1,\lambda_2)$ of $K\langle x,y\rangle$
is generated by a unique, up to a multiplicative constant,
homogeneous element $w_{\lambda}(x,y)$ of degree $\lambda_1$ and $\lambda_2$
in $x$ and $y$, respectively,
called the ``highest weight vector'' of $W(\lambda)$. It is characterized in
the following way, see \cite{DEP, ADF} and \cite{K} for the version which
we need. We state it for two variables only. Recall that a linear operator
$\delta$ on an algebra $R$ is called a derivation if $\delta(uv)=
\delta(u)v+u\delta(v)$ for all $u,v\in R$. We define a derivation
$\Delta$ of $K\langle x,y\rangle$ by putting
\begin{equation}\label{derivation for hwv}
\Delta(x)=0,\quad \Delta(y)=x
\end{equation}
and a linear operator $h\in GL_2$ by
$h(x)=x$, $h(y)=x+y$, i.e.,
\begin{equation}\label{triangular matrix for hwv}
h=\left(
\begin{matrix}
1 & 1\\
0 & 1\\
\end{matrix}
\right).
\end{equation}
The $GL_2$-submodules and factor modules of $K\langle x,y\rangle$
are invariant under the action of $\Delta$ and we can extend $\Delta$
also to tensor products, symmetric algebras, and other constructions with such modules. For example,
if $W_1,W_2\subset K\langle x,y\rangle$, we define $\Delta$ on the
tensor product $W_1\otimes W_2$ by $\Delta(w_1\otimes w_2) =
\Delta(w_1)\otimes w_2+w_1\otimes \Delta(w_2)$, $w_i\in W_i$.

\begin{lemma}\label{criterion for hwv}
{\rm (\cite{ADF, DEP, K}, see also \cite{BD})}
Let $\Delta$ and $h$ be defined as in {\rm (\ref{derivation for hwv})}
and {\rm (\ref{triangular matrix for hwv})}, respectively.
The homogeneous polynomial $w_{\lambda}(x,y) \in K\langle x,y\rangle$ of degree
$(\lambda_1,\lambda_2)$ is a highest weight vector for some $W(\lambda_1,\lambda_2)$
if and only if $\Delta(w_{\lambda}(x,y))=0$ or,
equivalently, $h(w_{\lambda}(x,y))=w_{\lambda}(x,y)$.
\end{lemma}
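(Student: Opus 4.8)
The plan is to derive the lemma from the highest-weight theory of $GL_2$ (equivalently of $\mathfrak{sl}_2$), by identifying $\Delta$ with the action of the raising operator on $K\langle x,y\rangle$ and $h$ with its exponential.

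First I would dispose of the equivalence $\Delta(w_{\lambda}(x,y))=0\iff h(w_{\lambda}(x,y))=w_{\lambda}(x,y)$. Let $h_t\in GL_2$ be the one-parameter subgroup with $h_t(x)=x$ and $h_t(y)=tx+y$, so that $h=h_1$, with $h$ as in (\ref{triangular matrix for hwv}). Both $\frac{d}{dt}\big|_{t=0}h_t$ and $\Delta$ are derivations of $K\langle x,y\rangle$ which agree on the generators $x$ and $y$ with the values prescribed in (\ref{derivation for hwv}), hence they coincide. Since $\Delta$ is homogeneous of bidegree $(1,-1)$ it is nilpotent on every (finite dimensional, $GL_2$-invariant) component of $K\langle x,y\rangle$ of a fixed total degree; on such a component $h_t$ therefore acts as $\exp(t\Delta)$, and in particular $h=\exp(\Delta)$. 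As $\mathrm{char}\,K=0$, on the relevant component one may write $\exp(\Delta)-1=\Delta\,U$ with $U=1+\tfrac{1}{2}\Delta+\tfrac{1}{6}\Delta^2+\cdots$ invertible and commuting with $\Delta$; hence $h(w_{\lambda})=w_{\lambda}$ if and only if $\Delta(w_{\lambda})=0$.

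The substantial step is the equivalence $\Delta(w_{\lambda}(x,y))=0\iff w_{\lambda}(x,y)$ is a highest weight vector of a copy of $W(\lambda_1,\lambda_2)$ inside $K\langle x,y\rangle$. Write $m=\lambda_1+\lambda_2$ and consider the total-degree-$m$ component $V_m$ of $K\langle x,y\rangle$; it is a finite dimensional polynomial $GL_2$-module, so $V_m=\bigoplus_i W(\mu^{(i)})$ with each summand a bigraded submodule, being a sum of weight spaces. Since $\mathrm{diag}(\alpha,\beta)$ acts on a monomial with $p$ entries $x$ and $q$ entries $y$ by the scalar $\alpha^p\beta^q$, the bidegree $(p,q)$ component of $V_m$ is precisely its weight $(p,q)$ space. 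Now $\Delta$ preserves each $W(\mu^{(i)})$, so for $w_{\lambda}=\sum_i v_i$ with $v_i\in W(\mu^{(i)})^{(\lambda_1,\lambda_2)}$ the condition $\Delta(w_{\lambda})=0$ is equivalent to $\Delta(v_i)=0$ for all $i$. Inside $W(\mu)$ the weights are $(\mu_1-j,\mu_2+j)$, $0\le j\le\mu_1-\mu_2$, each with multiplicity one; $\Delta$ kills the highest weight space ($j=0$) and is injective on all the others, the standard $\mathfrak{sl}_2$ fact, which one can also check by comparing the dimensions of consecutive bigraded components read off from the Schur function $S_{\mu}(t,u)$. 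Hence each nonzero $v_i$ is a highest weight vector of $W(\mu^{(i)})$, which forces $\mu^{(i)}=(\lambda_1,\lambda_2)$; and then $w_{\lambda}$, a linear combination of the highest weight vectors of the copies of $W(\lambda_1,\lambda_2)$ in $V_m$, spans together with its images under the lowering operator a submodule isomorphic to $W(\lambda_1,\lambda_2)$ of which $w_{\lambda}$ is the highest weight vector. The reverse implication is immediate, since the highest weight vector of any $W(\lambda_1,\lambda_2)\subseteq K\langle x,y\rangle$ is by definition annihilated by the raising operator $\Delta$.

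I do not anticipate a genuine obstacle, as this is a classical translation of $GL_2$ highest-weight theory into the language of the derivation $\Delta$; the only points calling for a little care are the reduction from the infinite dimensional algebra $K\langle x,y\rangle$ to its finite dimensional fixed-total-degree components, used both for the identity $h=\exp(\Delta)$ and for the decomposition into irreducibles, and the observation that a linear combination of highest weight vectors coming from several isomorphic copies of $W(\lambda_1,\lambda_2)$ is again the highest weight vector of a single, diagonally embedded, copy of $W(\lambda_1,\lambda_2)$.
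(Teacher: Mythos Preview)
Your argument is correct. The paper itself does not prove this lemma at all: it is stated with attribution to \cite{ADF, DEP, K} (see also \cite{BD}) and used as a black box, so there is no proof in the paper to compare against. What you have written is exactly the standard derivation one would expect from those references, namely identifying $\Delta$ with the raising operator of the $\mathfrak{sl}_2$-action, recognising $h=\exp(\Delta)$ on each finite dimensional homogeneous component, and then invoking the weight-space structure of the irreducible $W(\mu)$ to see that $\ker\Delta$ on the $(\lambda_1,\lambda_2)$-component picks out precisely the highest weight vectors of the copies of $W(\lambda_1,\lambda_2)$. The two small subtleties you flag, working in a fixed total degree to ensure finite dimensionality and nilpotence of $\Delta$, and the observation that a nonzero linear combination of highest weight vectors from isomorphic summands is again a highest weight vector of a diagonal copy, are exactly what is needed and are handled correctly.
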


If $W_i\subset K\langle x,y\rangle$, $i=1,\ldots,k$,
are $k$ isomorphic copies of $W(\lambda)$ and $w_i\in W_i$ are highest weight
vectors, then $w_1,\ldots,w_k$ span a vector subspace
$V=Kw_1+\cdots +Kw_k$ of $K\langle x,y\rangle$
with the following property. The nonzero elements of $V$ are
highest weight vectors of submodules $W(\lambda)$
of the sum $W_1+\cdots+W_k$ and every highest weight vector can be obtained
in such a way. The sum $W_1+\cdots+W_k$ is direct if and only if
$w_1,\ldots,w_k$ are linearly independent.
The following statement is a direct consequence of Lemma \ref{criterion for hwv}.

\begin{corollary}\label{hwv as solutions of linear system}
If $W(\lambda)$, $\lambda=(\lambda_1,\lambda_2)$,
participates with multiplicity $m(\lambda)$ in the $GL_2$-submodule
$W$ of $K\langle x,y\rangle$, then the vector space of
the highest weight vectors $w_{\lambda}(x,y)$ is an $m(\lambda)$-dimensional
subspace of the homogeneous component $W^{(\lambda_1,\lambda_2)}$ of $W$.
Any basis $\{w_1,\ldots,w_{m(\lambda)}\}$ of this subspace generates
the direct sum $(W(\lambda))^{\oplus m(\lambda)}\subset W$ as $GL_2$-submodule.
\end{corollary}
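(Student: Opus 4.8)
The plan is to deduce the statement from Lemma~\ref{criterion for hwv} together with the complete reducibility of the $GL_2$-module $K\langle x,y\rangle$ and the discussion preceding the corollary. First I would decompose $W=\bigoplus_j W_j$ into irreducible submodules, $W_j\cong W(\mu_j)$, and collect those summands isomorphic to $W(\lambda)$; after reindexing these are $W_1,\ldots,W_{m(\lambda)}$, and their sum $W_{[\lambda]}:=W_1\oplus\cdots\oplus W_{m(\lambda)}$ is the $\lambda$-isotypic component of $W$. Each $W_i$ with $i\leq m(\lambda)$ has, up to a multiplicative constant, a unique highest weight vector $w_i$, which is homogeneous of bidegree $(\lambda_1,\lambda_2)$ and hence lies in $W^{(\lambda_1,\lambda_2)}$; since the sum $W_1\oplus\cdots\oplus W_{m(\lambda)}$ is direct, the $w_i$ are linearly independent, and by the paragraph preceding the corollary their span $V=Kw_1+\cdots+Kw_{m(\lambda)}$ is an $m(\lambda)$-dimensional subspace whose nonzero elements are precisely the highest weight vectors of the copies of $W(\lambda)$ contained in $W_{[\lambda]}$.

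The one point that needs an argument is that $V$ already contains \emph{every} highest weight vector of weight $\lambda$ in $W$, not only those lying in $W_{[\lambda]}$. For this, let $w\in W^{(\lambda_1,\lambda_2)}$ be homogeneous with $\Delta(w)=0$, which by Lemma~\ref{criterion for hwv} is the defining property of a highest weight vector, and write $w=\sum_j w_j$ according to the decomposition $W=\bigoplus_j W_j$. Since each $W_j$ is invariant under $\Delta$ and inherits the bigrading of $K\langle x,y\rangle$, every $w_j$ lies in $W_j^{(\lambda_1,\lambda_2)}$ and satisfies $\Delta(w_j)=0$, so by Lemma~\ref{criterion for hwv} each nonzero $w_j$ is a highest weight vector of $W_j\cong W(\mu_j)$; but such a vector is homogeneous of bidegree $(\mu_{j,1},\mu_{j,2})$ and is unique up to a scalar, which forces $\mu_j=\lambda$. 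Hence $w\in W_{[\lambda]}$, and then $w\in V$ by the cited discussion. Therefore the highest weight vectors of $W$ of weight $\lambda$, together with $0$, form exactly the $m(\lambda)$-dimensional subspace $V\subseteq W^{(\lambda_1,\lambda_2)}$.

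Finally I would verify the last sentence of the corollary. Let $\{w_1',\ldots,w_{m(\lambda)}'\}$ be any basis of $V$. Each $w_i'$ is a highest weight vector of weight $\lambda$, hence generates a $GL_2$-submodule $M_i\cong W(\lambda)$; since the $w_i'$ are linearly independent, the discussion preceding the corollary gives that the sum $M_1+\cdots+M_{m(\lambda)}$ is direct, so it is isomorphic to $(W(\lambda))^{\oplus m(\lambda)}$. This submodule is contained in $W_{[\lambda]}$ and has the same dimension $m(\lambda)\dim W(\lambda)$, so it coincides with $W_{[\lambda]}$; thus any basis of $V$ generates $W_{[\lambda]}\cong(W(\lambda))^{\oplus m(\lambda)}\subseteq W$, as claimed. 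The whole argument is essentially bookkeeping on top of Lemma~\ref{criterion for hwv} and complete reducibility; the only delicate step is the reduction, via the $\Delta$-invariance of the irreducible summands and the uniqueness of their highest weight vectors, of an arbitrary highest weight vector of $W$ to the $\lambda$-isotypic component $W_{[\lambda]}$.
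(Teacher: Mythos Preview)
Your proof is correct and is exactly the natural elaboration the paper has in mind: the paper does not give a separate proof of this corollary at all, merely stating that it ``is a direct consequence of Lemma~\ref{criterion for hwv}'' together with the preceding discussion about spans of highest weight vectors. Your argument just unpacks this, using complete reducibility to decompose $W$, Lemma~\ref{criterion for hwv} to identify highest weight vectors via $\Delta$, and the paragraph before the corollary to handle linear independence and generation---so your approach coincides with the paper's intended one.
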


\section{Algorithms}

For our concrete computations we need the explicit form of the highest weight
vectors in the symmetric algebra $K[G_0]$, where $G_0=G/W(1,0)$ generates $C_0$
and $G$ is given in (\ref{decomposition of generators}).
In \cite{ADS, BD, DS} a similar problem was solved by careful study of the
symmetric tensor powers $K[W(\lambda)]$ and their tensor products,
based on the Littlewood-Richardson rule (or, for $d=2$, on its partial case,
the Young rule) and symmetric tensor powers on the level of \cite{M, Th}.
In the present paper we use a simplified approach and work directly
in the symmetric algebra $K[G_0]$. (After we had finished the computations
we learned that a similar simplification was used independently by Hoge \cite{H}.)
We define the derivation $\Delta_1$ of $K\langle x,y\rangle$ by
\begin{equation}\label{derivation for generation of module}
\Delta_1(x)=y,\quad \Delta_1(y)=0
\end{equation}
and a linear operator $h_1\in GL_2$ by
$h_1(x)=x+y$, $h_1(y)=y$, i.e.,
\begin{equation}\label{triangular matrix for generation of module}
h_1=\left(\begin{matrix}
1&0\\
1&1\\
\end{matrix}\right).
\end{equation}
As in the case of $\Delta$ from (\ref{derivation for hwv})
we extend the action of $\Delta_1$ on $GL_2$-modules related with
$K\langle x,y\rangle$.
The following lemma gives an algorithm which finds a basis of
$W(\lambda)$.

\begin{lemma}\label{algorithm for basis of module}
If $\lambda=(a+b,b)$ and $w(x,y)\in W(\lambda)\subset K\langle x,y\rangle$ is
a highest weight vector, then the set
\begin{equation}\label{basis of module}
\left\{
w,\frac{\Delta_1(w)}{a},\frac{\Delta_1^2(w)}{a(a-1)},
\ldots,\frac{\Delta_1^a(w)}{a(a-1)\cdots 2\cdot 1}
\right\}
\end{equation}
is a basis of the module $W(\lambda)$.
Here $\Delta_1$ is the derivation defined in
{\rm (\ref{derivation for generation of module})}.
\end{lemma}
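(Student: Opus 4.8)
The plan is to exploit the representation-theoretic picture set up in Section~1, together with the standard $\mathfrak{sl}_2$-theory underlying polynomial $GL_2$-modules. Recall that $W(\lambda)$ with $\lambda=(a+b,b)$ is an irreducible $GL_2$-module of dimension $a+1$, and that the operator $\Delta_1$ of (\ref{derivation for generation of module}) is the ``lowering'' companion of the ``raising'' operator $\Delta$ from (\ref{derivation for hwv}); together with the weight grading (the bidegree counting $x$'s and $y$'s) these generate a copy of $\mathfrak{sl}_2$ acting on $K\langle x,y\rangle$. Under this action, $W(a+b,b)$ is the irreducible $\mathfrak{sl}_2$-module of highest weight $a$, the highest weight vector being $w=w_\lambda(x,y)$, which is annihilated by $\Delta$ by Lemma~\ref{criterion for hwv}. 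The weight of a bihomogeneous element of bidegree $(p,q)$ is $p-q$; so $w$ has weight $a$, and $\Delta_1$ drops the weight by $2$ at each application (it replaces one $x$ by one $y$). Consequently the vectors $w,\Delta_1(w),\Delta_1^2(w),\ldots$ have strictly decreasing weights $a,a-2,a-4,\ldots$, hence are linearly independent as long as they are nonzero, and they all lie in $W(\lambda)$ because $W(\lambda)$ is $\Delta_1$-invariant (stated in Section~1).

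Next I would pin down exactly how far the chain runs and what the normalizing constants are. This is the classical computation in the representation theory of $\mathfrak{sl}_2$: if $v_0$ is a highest weight vector of weight $a$ in an irreducible module, set $v_k=\Delta_1^k(v_0)$; then one proves by induction, using the commutation relation $[\Delta,\Delta_1]=-H$ (where $H$ is the weight operator, acting on a weight-$m$ vector as multiplication by $m$) — or equivalently by a direct bihomogeneous bookkeeping with the explicit derivations $\Delta,\Delta_1$ on $K\langle x,y\rangle$ — that
\begin{equation*}
\Delta(v_k)=k(a-k+1)\,v_{k-1}.
\end{equation*}
In particular $\Delta(v_{a+1})=0$, and since $v_{a+1}$ has weight $a-2(a+1)=-a-2<-a$ while all weights occurring in $W(\lambda)$ lie in $[-a,a]$, we get $v_{a+1}=0$; on the other hand $v_0,\ldots,v_a$ are nonzero (if some $v_k$ with $k\le a$ vanished, then applying $\Delta$ repeatedly would force $v_0=0$, using that the scalars $k(a-k+1)$ are nonzero for $1\le k\le a$). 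Thus $\{v_0,v_1,\ldots,v_a\}$ is a set of $a+1$ linearly independent elements of the $(a+1)$-dimensional space $W(\lambda)$, hence a basis. Finally, unwinding the recursion $\Delta(v_k)=k(a-k+1)v_{k-1}$ shows that the ``descending'' normalization is $v_k/(a(a-1)\cdots(a-k+1))$, which is exactly the set displayed in (\ref{basis of module}); equivalently one checks directly that $\Delta_1^k(w)/\bigl(a(a-1)\cdots(a-k+1)\bigr)$ is the standard weight-basis vector, so that the displayed set is a (rescaled) basis of $W(\lambda)$.

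I expect the only real point requiring care to be the verification of the commutation-type identity $\Delta(\Delta_1^k(w))=k(a-k+1)\Delta_1^{k-1}(w)$ — and correspondingly the claim that the displayed denominators are the correct ones. This is where one must be honest about whether one argues abstractly (via $\mathfrak{sl}_2$ and $[\Delta,\Delta_1]=-H$, noting $w$ has weight $a$ so $H$ acts as $a$ at the top) or computationally (tracking bidegrees of monomials under the explicit derivations). Either route is short but must be done correctly, since everything — linear independence, the length $a+1$ of the chain, and the exact constants in (\ref{basis of module}) — is read off from it. Everything else (irreducibility and dimension $a+1$ of $W(\lambda)$, its $\Delta_1$-invariance, the weight-grading argument for linear independence) is either standard or already recorded earlier in the excerpt.
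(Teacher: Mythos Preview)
Your argument is correct and complete (the one quibble is that with the paper's conventions $[\Delta,\Delta_1]=H$ rather than $-H$, but you explicitly flag this step as the one requiring care, and the identity $\Delta(\Delta_1^k w)=k(a-k+1)\Delta_1^{k-1}w$ you actually use is right). However, your route is genuinely different from the paper's. The paper does not invoke $\mathfrak{sl}_2$-commutation relations at all: it simply observes that $h_1=\exp(\Delta_1)$ as an automorphism of $K\langle x,y\rangle$, so that $h_1(w)=\sum_{k\ge 0}\Delta_1^k(w)/k!$, and then appeals to the ``well known'' fact that the bihomogeneous components of $h_1(w)$ span $W(\lambda)$ when $w$ is a highest weight vector. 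Since $\Delta_1^k(w)$ has bidegree $(a+b-k,b+k)$, these components are exactly the nonzero $\Delta_1^k(w)$ up to scalars, and the lemma follows in one line. Your approach is more self-contained---it derives from scratch both the length of the chain and the precise nonvanishing, and it explains why the particular denominators $a(a-1)\cdots(a-k+1)$ are natural (they normalize to the monomial basis in the model $W(a,0)\cong F(a)$). The paper's approach is shorter but leans on a group-theoretic fact taken as known; yours trades that for a standard Lie-algebra computation.
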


\begin{proof}
It is well known that, starting with a highest weight vector
$w\in W(\lambda)$, the homogeneous components of $h_1(w)$
form a basis of $W(\lambda)$, where $h_1\in GL_2$ is from
(\ref{triangular matrix for generation of module}).
Now the proof follows from the fact that, up to a multiplicative constant,
$\Delta_1^k(w)$ is equal to the homogeneous component of degree $(a+b-k,b+k)$
of $\varepsilon_1(w)$, where
\[
\varepsilon_1=\exp(\Delta_1)=1+\Delta_1/1!+\Delta_1^2/2!+\cdots
\]
is the related exponential automorphism of the
locally nilpotent derivation $\Delta_1$, and
$h_1=\exp(\Delta_1)$.
\end{proof}

\begin{example}\label{basis of form in two variables}
(i) The $GL_2$-module $F(a)$ of the forms of degree $a$ in the polynomial algebra
$K[x,y]$ in two variables $x,y$
is isomorphic to $W(a,0)$ and $w=x^a$ is its highest weight vector.
Since $\Delta_1(y)=0$, we obtain
\[
\Delta_1(w)=ax^{a-1}y,\Delta_1^2(w)=a(a-1)x^{a-2}y^2,\cdots,
\]
\[
\Delta_1^{a-1}(w)=a(a-1)\cdots 2\cdot xy^{a-1},
\Delta_1^a(w)=a(a-1)\cdots 2\cdot 1\cdot y^a,
\Delta_1^{a+1}(w)=0.
\]
Hence Lemma \ref{algorithm for basis of module} gives the basis of $F(a)$
\[
\{x^a,x^{a-1}y,\ldots,xy^{a-1},y^a\}.
\]

(ii) Consider the submodules of $G_0$ in (\ref{decomposition of generators}).
The basis of $W(4,0)$ consists of the highest weight vector
\[
w=\text{tr}(x^4),
\]
\[
\frac{\Delta_1(w)}{4}=\frac{1}{4}\text{tr}(yx^3+xyx^2+x^2yx+x^3y)=\text{tr}(x^3y),
\]
\[
\frac{\Delta_1^2(w)}{4\cdot 3}=\frac{1}{3}\text{tr}((yx^2+xyx+x^2y)y)
=\frac{1}{3}(2\text{tr}(x^2y^2)+\text{tr}(xyxy)),
\]
\[
\frac{\Delta_1^3(w)}{4\cdot 3\cdot 2}=\text{tr}(xy^3),
\]
\[
\frac{\Delta_1^4(w)}{4\cdot 3\cdot 2\cdot 1}=\text{tr}(y^4).
\]

The basis of $W(5,3)\subset G_0$ consists of
\[
w=\text{tr}((xy-yx)^3x^2),
\]
\[
\frac{\Delta_1(w)}{2}=\frac{1}{2}\text{tr}((xy-yx)^3(yx+xy)),
\]
\[
\frac{\Delta_1^2(w)}{2\cdot 1}=\text{tr}((xy-yx)^3y^2).
\]
Note that we make use of the fact that the trace of
a product does not change under a cyclic permutation of its factors.
\end{example}

Applying Corollary \ref{hwv as solutions of linear system}
we obtain the following algorithm which is in the base of our further computations.

\begin{algorithm}\label{algorithm for hwv} \hfill

{\it Input.}
A partition $\lambda=(\lambda_1,\lambda_2)$ and
a system of highest weight vectors $w_i\in W_i$, $i=1,\ldots,k$,
where each $W_i$ is an irreducible $GL_2$-submodule
of $K\langle x,y\rangle$.

{\it Output.} A basis of the vector space of highest weight vectors
$w_{\lambda}(x,y)$ in the symmetric algebra $K[W]$ of the direct sum
$W=W_1\oplus\cdots\oplus W_k$.

{\bf Step 1.} Applying Lemma \ref{algorithm for basis of module}, find
homogeneous bases $\{u_{i0},\ldots,u_{ia_i}\}$ of the modules $W_i$,
$i=1,\ldots,k$.

{\bf Step 2.} In $K[W]$, form all products
\begin{equation}\label{the products w}
w_p=\prod_{i=1}^k\prod_{j=0}^{a_i}u_{ij}^{r_{ij}},\quad
p=1,\ldots,P,
\end{equation}
\begin{equation}\label{the products v}
v_q=\prod_{i=1}^k\prod_{j=0}^{a_i}u_{ij}^{s_{ij}},\quad q=1,\ldots,Q,
\end{equation}
which are of degree $(\lambda_1,\lambda_2)$ and $(\lambda_1+1,\lambda_2-1)$,
respectively. Present each $\Delta(w_p)$ in the form
\[
\Delta(w_p)=\sum_{q=1}^Q\alpha_{qp}v_q,\quad  \alpha_{qp}\in K.
\]

{\bf Step 3.} Consider the element
\[
w=\sum_{p=1}^P \xi_pw_p,
\]
with unknown coefficients $\xi_p\in K$. Calculate
\[
\Delta(w)=\sum_{q=1}^Q\left(\sum_{p=1}^P\alpha_{qp}\xi_p\right)v_p.
\]

{\bf Step 4.} Solve the homogeneous linear system
\begin{equation}\label{homogeneous linear system}
\sum_{p=1}^P\alpha_{qp}\xi_p=0,\quad q=1,\ldots,Q,
\end{equation}
whose equations are obtained from the equation
$\Delta(w)=0$.

{\bf Step 5.} Any basis
\[
\{\Xi_r=(\xi_1^{(r)},\ldots,\xi_P^{(r)})\mid r=1,\ldots,s\}
\]
of the vector space of solutions of the system gives rise to
a basis of the space of highest weight vectors.
\end{algorithm}

\begin{remark}\label{simplification of computations}
Instead of solving one big system (\ref{homogeneous linear system}),
we may solve several systems of smaller size. Let $W_i=W(\nu^{(i)})$
for some partition $\nu^{(i)}$. For each
$m_1,\ldots,m_k$ such that
\[
\sum_{i=1}^km_i\vert\nu_i\vert=\vert\lambda\vert
\]
the vector space $V(m_1,\ldots,m_k)$
spanned on those elements $w_p$ from (\ref{the products w}) with
\[
\sum_{j=0}^{a_i}r_{ij}=m_i,\quad i=1,\ldots,k,
\]
is a $GL_2$-submodule of $K[W]$. Since
\[
K[W]=K[W_1\oplus\cdots\oplus W_k]\cong
K[W_1]\otimes\cdots\otimes K[W_k],
\]
we derive that
\begin{equation}\label{symmetrized powers}
V(m_1,\ldots,m_k)
\cong W_1^{\otimes_sm_1}\otimes\cdots\otimes W_k^{\otimes_sm_k},
\end{equation}
where $W_i^{\otimes_sm_i}$ is the $m_i$-th symmetrized tensor power
of $W_i$. The sum of all $V(m_1,\ldots,m_k)$ is direct, and we may
choose a basis of the vector space of the $\lambda$-highest weight vectors
in $K[W]$ as the union of the corresponding bases in $V(m_1,\ldots,m_k)$.
If $k>1$, the homogeneous linear systems corresponding to $V(m_1,\ldots,m_k)$
are simpler that the whole system (\ref{homogeneous linear system})
for most of the $\lambda$.
\end{remark}

Obvious modifications of Algorithm \ref{algorithm for hwv}
give the highest weight vectors in
other situations. For example, let $W_1$ and $W_2$ have homogeneous bases
$\{u_0,u_1,\ldots,u_p\}$ and $\{v_0,v_1,\ldots,v_q\}$, respectively.
If we want to find the highest weight vectors
in the tensor product $W_1\otimes W_2$,
we have to solve the homogeneous linear system
obtained from the equation
\[
\Delta\left(\sum\xi_{ij}u_i\otimes v_j\right)
=\sum \xi_{ij}(\Delta(u_i)\otimes v_j+u_i\otimes\Delta(v_j))=0,
\]
where the sum is on all $i,j$ such that $u_i\otimes v_j$ is homogeneous
of degree $(\lambda_1,\lambda_2)$.

\begin{remark}
If we want to find only the multiplicity of $W(\lambda)$ in
$K[W]$, where $W=W_1\oplus\cdots\oplus W_k$, we can proceed in the following way.
If $W_i=W(\nu^{(i)})$, then the Hilbert series of $W$ is a sum of Schur functions,
\[
H(W,t,u)=\sum_{i=1}^kS_{\nu^{(i)}}(t,u)=\sum a_{bc}t^bu^c,
\quad a_{bc}\in{\mathbb N}\cup \{0\}.
\]
Hence the Hilbert series of $K[W]$ is
\[
H(K[W],t,u)=\prod_{b,c}\frac{1}{(1-t^bu^c)^{a_{bc}}}=\sum_{p,q}h(p,q)t^pu^q,
\quad h(p,q)\in{\mathbb N}\cup \{0\}.
\]
By (\ref{definition of Schur function}) the Schur function $S_{\mu}(t,u)$
contains the summand $t^{\lambda_1}u^{\lambda_2}$ if and only if
$\mu_1+\mu_2=\lambda_1+\lambda_2$ and $\mu_1\geq\lambda_1$. This easily
implies that the multiplicity of $W(\lambda)$ is given by the formula
\begin{equation}\label{formula for multiplicities only}
m(\lambda)=h(\lambda_1,\lambda_2)-h(\lambda_1+1,\lambda_2-1).
\end{equation}
Similarly, if we want to find the multiplicity of $W(\lambda)$
in the tensor product $W_1\otimes\cdots \otimes W_k$, $W_i=W(\nu^{(i)})$,
$i=1,\ldots,k$, we have to present the product of the corresponding
Schur functions in the form
\[
\prod_{i=1}^kS_{\nu^{(i)}}(t,u)=\sum_{p,q} h(p,q)t^pu^q, \quad
h(p,q)\in{\mathbb N}\cup \{0\},
\]
and to obtain the multiplicity of $W(\lambda)$ by the formula
(\ref{formula for multiplicities only}).
\end{remark}

We want now to give a compact form for the highest weight vectors
of the tensor products $V(m_1,\ldots,m_k)$ defined in (\ref{symmetrized powers}).
We fix an order on the summands $W_i$ in the decomposition of $G_0=G/W(1,0)$
given in (\ref{decomposition of generators}). We put:
\begin{equation}\label{order of modules}
\begin{array}{c}
W_1= W(2,0),\quad W_2=W(3,0),\quad W_3=W(4,0),\\
\\
W_4=W(2,2),\quad W_5=W(3,2),\quad W_6=W(4,2),\\
\\
W_7=W(3,3),\quad W_8=W(4,3),\quad W_9=W(5,3),\\
\\
W_{10}=W(4,4),\quad W_{11}=W(6,3),\quad W_{12}=W(5,5).\\
\end{array}
\end{equation}
For each $W_i=W(\lambda)$ we fix a highest weight vector
$w_i=w_{\lambda}(x,y)$ given in (\ref{hwv for general case})
and (\ref{hwv for 55}).
Rewriting $\lambda=(\lambda_1,\lambda_2)$ in the form $\lambda=(a+b,b)$
we assume that $W_i=W(a_i+b_i,b_i)$.
The $GL_2$-module $W(a_i+b_i,b_i)$ is isomorphic to the tensor product
$\text{\rm det}^{b_i}\otimes W(a_i,0)$, where $\text{det}^{b_i}$
is the one-dimensional $GL_2$-module with $GL_2$-action defined by
\[
g(v)=(\det(g))^{b_i}\cdot v,\quad g\in GL_2, v\in \text{det}^{b_i}.
\]
The module $W(a_i,0)$ has a natural realization as
the module $F(a_i)$ of the forms of degree $a_i$ in two variables $x_i,y_i$.
We fix a nonzero element of $\text{det}^{b_i}$ and denote it by
$t_i^{b_i}$. Omitting the symbol $\otimes$ for the tensor product,
$\text{\rm det}^{b_i}\otimes F(a_i)$ has a basis
\[
\{t_i^{b_i}x_i^{a_i},t_i^{b_i}x_i^{a_i-1}y_i,
\ldots,t_i^{b_i}x_iy_i^{a_i-1},t_i^{b_i}y_i^{a_i}\}
\]
with action of $GL_2$ defined by
\[
g(t_i^{b_i}x_i^jy_i^{a_i-j})
=(\text{\rm det}(g))^{b_i}t_i^{b_i}(g(x_i))^j(g(y_i))^{a_i-j},\quad g\in GL_2.
\]
Using the highest weight vector $w_i$ of $W(a_i+b_i,b_i)$ from
(\ref{hwv for general case}) and (\ref{hwv for 55}), we fix the $GL_2$-module
isomorphism
\begin{equation}\label{isomorphism of modules}
\varphi_i:\text{\rm det}^{b_i}\otimes F(a_i)\to W_i=W(a_i+b_i,b_i)
\end{equation}
which sends $t_i^{b_i}x_i^{a_i}$ to $w_i(x,y)$.
The concrete form of the image of $t_i^{b_i}x_i^jy_i^{a_i-j}$ in $W(a_i+b_i,b_i)$
can be obtained applying
Lemma \ref{algorithm for basis of module}.
For the derivation $\Delta_1$ from (\ref{derivation for generation of module}),
\[
\Delta_1(\text{det}^{b_i})=0,
\]
\[
\frac{1}{j}\Delta_1(x_i^jy_i^{a_i-j})=x_i^{j-1}y_i^{a_i+1-j}
\]
and we define recursively
\[
\varphi_i(t_i^{b_i}x_i^{a_i})=w_i(x,y),
\]
\[
\varphi_i(t_i^{b_i}x_i^{j-1}y_i^{a_i+1-j})
=\frac{1}{j}\Delta_1(\varphi_i(t_i^{b_i}x_i^jy_i^{a_i-j})),\quad
j=a_i,a_i-1,\ldots,2,1.
\]
For example, if $\lambda=(6,3)$, then $W(\lambda)=W_{11}$
in the notation of (\ref{order of modules}),
\[
\varphi_{11}(t_{11}^3x_{11}^3)
=w_{11}(x,y)=\text{\rm tr}([x,y]^3x^3),
\]
\[
\varphi_{11}(t_{11}^3x_{11}^2y_{11})
=\frac{1}{3}\Delta_1(w_{11})
=\frac{1}{3}\text{\rm tr}([x,y]^3(yx^2+xyx+x^2y)),
\]
\[
\varphi_{11}(t_{11}^3x_{11}y_{11}^2)=\frac{1}{3}\text{\rm tr}([x,y]^3(y^2x+yxy+xy^2)),
\]
\[
\varphi_{11}(t_{11}^3y_{11}^3)=\text{\rm tr}([x,y]^3y^3).
\]
Now we extend the $GL_2$-module isomorphisms $\varphi_i$ to the symmetric algebras.
Let
\begin{equation}\label{isomorphism of symmetric algebras}
\Phi:K\left[\bigoplus_{i=1}^{12}\text{\rm det}^{b_i}\otimes F(a_i)\right]\to K[G_0]
\end{equation}
be defined by
\[
\Phi\left(\prod_{i=1}^{12}\prod_{j=0}^{a_i}(t_i^{b_i}x_i^jy_i^{a_i-j})^{c_{ij}} \right)
=\prod_{i=1}^{12}\prod_{j=0}^{a_i}(\varphi_i(t_i^{b_i}x_i^jy_i^{a_i-j}))^{c_{ij}},\quad c_{ij}\geq 0.
\]
In order to avoid the confusion and to distinguish e.g.
$(x_1y_1)^2 = (x_1y_1)\otimes (x_1y_1)$
and $(x_1^2)(y_1^2)=x_1^2\otimes y_1^2$ in
$F(2)^{\otimes_s2}$, in the summands where $\sum_{j=0}^{a_i}c_{ij}>1$,
we shall denote the elements $x_i^jy_i^{a_i-j}$ by $z_i^{(j,a_i-j)}$.
Hence, instead of $(x_1y_1)^2 = (x_1y_1)\otimes (x_1y_1)$
and $x_1^2y_1^2=x_1^2\otimes y_1^2$ we shall write
$(z_1^{(1,1)})^2$ and $(z_1^{(2,0)})(z_1^{(0,2)})$, respectively.
There is no confusion using $t_i$ because
\[
W(a_i+b_i,b_i)^{\otimes_sm_i}
\cong \text{\rm det}^{b_im_i}\otimes F(a_i)^{\otimes_sm_i}.
\]
For example, using the notation of (\ref{symmetrized powers})
and (\ref{order of modules}) one has
\[
V(1,2,1,0,\ldots,0)=W_1\otimes W_2^{\otimes_s2}\otimes W_3
\]
\[
= W(2,0)\otimes W(3,0)^{\otimes_s2}\otimes W(4,0),
\]
\[
\varphi_1(y_1^2)=\text{\rm tr}(y^2),
\]
\[
\varphi_2(x_2^3)=\varphi_2(z_2^{(3,0)})=\text{\rm tr}(x^3),\
\varphi_2(x_2y_2^2)=\varphi_2(z_2^{(1,2)})=\text{\rm tr}(xy^2),
\]
\[
\varphi_3(x_3^3y_3)=\text{\rm tr}(x^3y),
\]
\[
\Phi(y_1^2(z_2^{(3,0)})(z_2^{(1,2)})x_3^3y_3)
=\text{\rm tr}(y^2)\text{\rm tr}(x^3)\text{\rm tr}(xy^2)\text{\rm tr}(x^3y).
\]
For
\[
V(2,1,0,0,1,0,0,0,0,0,0,0)
=W_1^{\otimes_s2}\otimes W_2\otimes W_5
\]
\[
=W(2,0)^{\otimes_s2}\otimes W(3,0)\otimes W(3,2),
\]
\[
\varphi_1(x_1y_1)=\varphi_1(z_1^{(1,1)})=\text{\rm tr}(xy),
\]
\[
\varphi_1(y_1^2)=\varphi_1(z_1^{(0,2)})=\text{\rm tr}(y^2),
\]
\[
\varphi_2(x_2y_2^2)=\text{\rm tr}(xy^2),
\]
\[
\varphi_5(t_5^2x_5)=\text{\rm tr}([x,y]^2x),
\]
\[
\Phi((z_1^{(1,1)})(z_1^{(0,2)})x_2y_2^2t_5^2x_5)
=\text{\rm tr}(xy)\text{\rm tr}(y^2)\text{\rm tr}(xy^2)\text{\rm tr}([x,y]^2x).
\]
For
\[
V(2,0,0,2,0,\ldots,0)=W_1^{\otimes_s2}\otimes W_4^{\otimes_s2}
=W(2,0)^{\otimes_s2}\otimes W(2,2)^{\otimes_s2},
\]
\[
\varphi_1((z_1^{(2,0)})(z_1^{(0,2)}))
=\text{\rm tr}(x^2)\text{\rm tr}(y^2),
\]
\[
\varphi_4(t_4^4)=\text{\rm tr}^2([x,y]^2),
\]
\[
\Phi((z_1^{(2,0)})(z_1^{(0,2)})t_4^4)
=\text{\rm tr}(x^2)\text{\rm tr}(y^2)\text{\rm tr}^2([x,y]^2).
\]

\section{Computations and Results}

We shall explain now the computations for degree 12. From
(\ref{decomposition of relations}) we see that it is sufficient
to consider the cases $\lambda=(7,5)$ and $\lambda=(6,6)$ only.
First, we use Algorithm \ref{algorithm for hwv} to find the highest weight
vectors $w_{\lambda}(x,y)\in K[G_0]$. Applying Step 1 of the algorithm
we find bases of the submodules $W_1,\ldots,W_{12}$ of $G_0$. By Step 2,
we form all products (\ref{the products w}) and (\ref{the products v}) of degree
$(\lambda_1,\lambda_2)$ and $(\lambda_1+1,\lambda_2-1)$, respectively.

For $\lambda=(7,5)$ we obtain that $P=155$ and $Q=119$, i.e., there are
$155$ products (\ref{the products w}) of degree $(7,5)$
and $119$ products (\ref{the products v}) of degree $(8,4)$.
Applying Steps 3 and 4 we compute that the system
(\ref{homogeneous linear system}) has $s=36$ linearly independent solutions
which give rise to 36 linearly independent highest weight vectors.
Hence $W(7,5)$ participates with multiplicity 36 in $K[G_0]$.
We call these 36 highest weight vectors $w_1,\ldots,w_{36}$.

For $\lambda=(6,6)$ the corresponding data are
$P=185$, $Q=155$ and the number of the linear independent highest weight
vectors $w_{(6,6)}(x,y)\in K[G_0]$ is $s=30$.

The next step of the computations is to find the highest weight vectors of the
$GL_2$-modules $W(\lambda)\subset R_{12}$ of the defining relations of degree 12.
For $\lambda=(7,5)$ we proceed in the following way. We form the trace polynomial in $K[G_0]$
\begin{equation}\label{searching defining relations}
w=\sum_{i=1}^{36}\zeta_iw_i,
\end{equation}
where $w_i$ are the 36 linearly independent highest weight vectors corresponding to
the submodules $W(7,5)$ of $K[G_0]$ and $\zeta_i$ are unknown coefficients.
Then we evaluate $w$ on the generic traceless $4\times 4$ matrices $x$ and $y$
and obtain
\begin{equation}\label{evaluation on traceless matrices}
w(x,y)=\sum_{i=1}^{36}\zeta_iw_i(x,y)
=\sum_{p,q=1}^4\sum_{i=1}^{36}\zeta_iw_i^{(p,q)}(x,y)e_{pq},
\end{equation}
where the $(p,q)$-entry $w_i^{(p,q)}(x,y)$ of $w_i(x,y)$ is a homogeneous polynomial of degree 12
in the entries $x_{aa}, y_{b_1b_2}$ of $x$ and $y$. We require that $w(x,y)=0$ which is equivalent to
\[
w^{(p,q)}(x,y)=\sum_{i=1}^{36}\zeta_iw_i^{(p,q)}(x,y)=0,\quad p,q=1,2,3,4.
\]
We rewrite the relations $w^{(p,q)}(x,y)=0$ in the form
\[
w^{(p,q)}(x,y)=\sum_{c,d}\alpha_{cd}^{(p,q)}(\zeta_1,\ldots,\zeta_{36})
\prod_a x_{aa}^{c_a} \prod_{b_1,b_2}y_{b_1b_2}^{d_{b_1b_2}} = 0.
\]
Since the coefficients $\alpha_{cd}^{(p,q)}$ are equal to 0, we obtain a homogeneous linear system
\begin{equation}\label{system for relations}
\alpha_{cd}^{(p,q)}(\zeta_1,\ldots,\zeta_{36})=0
\end{equation}
with unknowns $\zeta_1,\ldots,\zeta_{36}$. The solutions of the system give rise to the highest weight vectors
which generate the submodules $W(7,5)$ of the $GL_2$-module $R_{12}$ of defining relations of degree 12.
The result of the computations is that the system has a unique nonzero solution which we shall give explicitly soon.

Similar computations for $\lambda=(6,6)$ give that the system, which corresponds to (\ref{system for relations})
in this case, has two linearly independent solutions. One of them is relatively simple:
\begin{equation}\label{simple relation in case 66}
v'_{(6,6)} = 3u_1(x,y) + 4u_2(x,y) + 6u_3(x,y) = 0,
\end{equation}
where
\[
\begin{array}{c}
u_1=\text{\rm tr}(x^2)\text{\rm tr}(y^3)\text{\rm tr}([x,y]^3x)
-\text{\rm tr}(y^2)\text{\rm tr}(xy^2)\text{\rm tr}([x,y]^3y)
\\ \\
-\ 2\text{\rm tr}(xy)\text{\rm tr}(xy^2)\text{\rm tr}([x,y]^3x)
+2\text{\rm tr}(xy)\text{\rm tr}(x^2y)\text{\rm tr}([x,y]^3y)
\\ \\
+\ \text{\rm tr}(y^2)\text{\rm tr}(x^2y)\text{\rm tr}([x,y]^3x)
-\text{\rm tr}(y^2)\text{\rm tr}(x^3)\text{\rm tr}([x,y]^3y),
\\ \\
u_2=-\text{\rm tr}(y^3)\text{\rm tr}([x,y]^3x^3)
+ 3\text{\rm tr}(xy^2)\text{\rm tr}([x,y]^3(yx^2+xyx+x^2y))
\\ \\
-\ 3\text{\rm tr}(x^2y)\text{\rm tr}([x,y]^3(y^2x+yxy+xy^2))
+\text{\rm tr}(x^3)\text{\rm tr}([x,y]^3y^3),
\\ \\
u_3=-\text{\rm tr}([x,y]^2x)\text{\rm tr}([x,y]^3y)
+\text{\rm tr}([x,y]^2y)\text{\rm tr}([x,y]^3x).
\\ \\
\end{array}
\]
Applying the isomorphism $\Phi$ from (\ref{isomorphism of symmetric algebras})
we rewrite $u_1,u_2,u_3$ as
\[
u_1=\Phi(-(x_1y_2 - y_1x_2)^2(x_2y_8 - y_2x_8)t_8^3).
\]
\[
u_2=\Phi((x_2y_{11} - y_2x_{11})^3t_{11}^3),
\]
\[
u_3=\Phi(-(x_5y_8-y_5x_8)t_5^2t_8^3),
\]
Hence (\ref{simple relation in case 66}) has the form
\begin{equation}\label{Phi-form of first case 66}
\begin{array}{c}
v'_{(6,6)} = \Phi(
-3 (x_1y_2 - y_1x_2)^2(x_2y_8 - y_2x_8)t_8^3
\\ \\
+ 4 (x_2y_{11} - y_2x_{11})^3t_{11}^3
- \ 6 (x_5y_8-y_5x_8)t_5^2t_8^3
) = 0.
\end{array}
\end{equation}
In the same notation the only solution for the case $\lambda=(7,5)$ is
\begin{equation}\label{Phi-form of case 75}
\begin{array}{c}
v_{(7,5)} = \Phi(
-\ 6 z_1^{(2,0)} (z_1^{(2,0)} z_1^{(0,2)} - (z_1^{(1,1)})^2)t_7^3
\\ \\
-\ 4((z_1^{(2,0)} z_1^{(0,2)} - 6 (z_1^{(1,1)})^2) x_9^2
            + 10 z_1^{(2,0)} z_1^{(1,1)}x_9 y_9  - 5(z_1^{(2,0)})^2 y_9^2 ) t_9^3
\\ \\
+\ 4 (x_1 y_2 - y_1x_2)x_2 ((x_1 y_2 + y_1x_2)x_8 - 2 x_1 x_2 y_8)t_8^3
\\ \\
+\ 16 (x_1 y_3 - y_1x_3)^2 x_3^2t_7^3
-\ x_1^2t_4^2 t_7^3
+ 8 x_1^2t_{12}^5
\\ \\
+ 28 (z_2^{(3,0)} z_2^{(1,2)} - (z_2^{(2,1)})^2)t_7^3
- 48  x_2 x_{11} (x_2 y_{11}- y_2x_{11})^2t_{11}^3
\\ \\
- 48 x_3^2 (x_3 y_9 - y_3x_9)^2t_9^3
- 16  x_9^2t_4^2 t_9^3
\\ \\
- 24 x_5 x_8 t_5^2 t_8^3
+ 4 x_6^2t_6^2  t_7^3
) = 0
\end{array}
\end{equation}
and the second relation for $\lambda=(6,6)$ is

\begin{equation}\label{Phi-form of second case 66}
\begin{array}{c}
v''_{(6,6)} = \Phi(
- 108 (z_1^{(2,0)} z_1^{(0,2)} - (z_1^{(1,1)})^2)^3
\\
+ 216 (z_1^{(2,0)} z_1^{(0,2)} - (z_1^{(1,1)})^2)
      (x_3^2 z_1^{(0,2)} - 2 x_3 y_3 z_1^{(1,1)}
      + y_3^2 z_1^{(2,0)})^2
\\
- 180 (z_1^{(2,0)} z_1^{(0,2)} - (z_1^{(1,1)})^2)^2t_4^2
\\
- 12 (  54 z_1^{(2,0)} (z_1^{(1,1)})^2 (z_2^{(1,2)})^2
      + 12 (z_1^{(2,0)})^2 z_1^{(0,2)} z_2^{(2,1)} z_2^{(0,3)}
\\
      + 30 z_1^{(2,0)} (z_1^{(1,1)})^2 z_2^{(2,1)} z_2^{(0,3)}
      - 42 (z_1^{(2,0)})^2 z_1^{(1,1)} z_2^{(1,2)} z_2^{(0,3)}
\\
      - 72 z_1^{(2,0)} z_1^{(1,1)} z_1^{(0,2)} z_2^{(2,1)} z_2^{(1,2)}
      + 9 (z_1^{(2,0)})^2 z_1^{(0,2)} (z_2^{(1,2)})^2
\\
      - 12 z_1^{(2,0)} z_1^{(1,1)} z_1^{(0,2)} z_2^{(3,0)} z_2^{(0,3)}
      - 42 z_1^{(1,1)} (z_1^{(0,2)})^2 z_2^{(3,0)} z_2^{(2,1)}
      \\
      + 54 (z_1^{(1,1)})^2 z_1^{(0,2)} (z_2^{(2,1)})^2
      - 54 (z_1^{(1,1)})^3 z_2^{(2,1)} z_2^{(1,2)}
\\
      + 30 (z_1^{(1,1)})^2 z_1^{(0,2)} z_2^{(3,0)} z_2^{(1,2)}
      + 7 (z_1^{(2,0)})^3 (z_2^{(0,3)})^2
\\
      + 7 (z_1^{(0,2)})^3 (z_2^{(3,0)})^2
      + 9 z_1^{(2,0)} (z_1^{(0,2)})^2 (z_2^{(2,1)})^2
\\
      - 2 (z_1^{(1,1)})^3 z_2^{(3,0)} z_2^{(0,3)}
      + 12 z_1^{(2,0)} (z_1^{(0,2)})^2 z_2^{(3,0)} z_2^{(1,2)})
\\
+ 216(z_1^{(2,0)} z_1^{(0,2)} - (z_1^{(1,1)})^2)
            (z_1^{(0,2)}x_6^2  - 2 z_1^{(1,1)}x_6 y_6  + z_1^{(2,0)}y_6^2) t_6^2
\\
+ 432 (z_1^{(0,2)}x_2^2
- 2 z_1^{(1,1)}x_2 y_2
\\
+ z_1^{(2,0)}y_2^2 )
            (- z_1^{(0,2)}x_2 x_5 + z_1^{(1,1)}(x_2 y_5 + x_5 y_2)
      - z_1^{(2,0)}y_2 y_5)t_5^2
\\
- 432 (
-2(z_1^{(2,0)})^2((z_3^{(1,3)})^2
- z_3^{(2,2)} z_3^{(0,4)})
\\
- 4 z_1^{(2,0)} z_1^{(1,1)}( z_3^{(3,1)} z_3^{(0,4)}
- z_3^{(2,2)} z_3^{(1,3)})
\\
- z_1^{(2,0)} z_1^{(0,2)} ((z_3^{(2,2)})^2
-z_3^{(4,0)} z_3^{(0,4)})
\\
(z_1^{(1,1)})^2(-5 (z_3^{(2,2)})^2
+ z_3^{(4,0)} z_3^{(0,4)}
 + 4 z_3^{(3,1)} z_3^{(1,3)})
\\
       - 4 z_1^{(1,1)} z_1^{(0,2)} (z_3^{(4,0)} z_3^{(1,3)}
      -z_3^{(3,1)} z_3^{(2,2)})
\\
+ 2 (z_1^{(0,2)})^2 (z_3^{(4,0)} z_3^{(2,2)}
- (z_3^{(3,1)})^2)
)
\\
+ 216 (z_1^{(0,2)}x_3^2  - 2 z_1^{(1,1)}x_3 y_3  + z_1^{(2,0)}y_3^2)^2t_4^2
\\
+ 33 (z_1^{(2,0)} z_1^{(0,2)}
            - (z_1^{(1,1)})^2)t_4^4
\\
+ 36 (- z_2^{(0,3)}x_3^3
      + 3 z_2^{(1,2)}x_3^2 y_3  - 3 z_2^{(2,1)}x_3 y_3^2
      + z_2^{(3,0)}y_3^3)
     (- x_1^2x_3z_2^{(0,3)}
\\
      + x_1(x_1 y_3 + 2 y_1 x_3) z_2^{(1,2)}
      - y_1(y_1x_3  + 2 x_1 y_3) z_2^{(2,1)} + y_1^2 y_3 z_2^{(3,0)})
\\
+ 45 (  x_1^2 (z_2^{(2,1)} z_2^{(0,3)} - (z_2^{(1,2)})^2)
\\
           - x_1 y_1 (z_2^{(3,0)} z_2^{(0,3)} - z_2^{(2,1)} z_2^{(1,2)})
           + y_1^2 (z_2^{(3,0)} z_2^{(1,2)} - (z_2^{(2,1)})^2))t_4^2
\\
- 108 (x_1 y_3 - y_1x_3)^2 (x_3 y_6 - y_3x_6)^2t_6^2
\\
+ 9 (x_1 y_6 - y_1x_6)^2t_4^2 t_6^2
- 108 (x_1 y_5 - y_1 x_5)^2t_5^4
\\
- 9 (  4 (z_2^{(2,1)})^3 z_2^{(0,3)}
     - 6 z_2^{(3,0)} z_2^{(1,2)} z_2^{(2,1)} z_2^{(0,3)}
     + (z_2^{(3,0)})^2 (z_2^{(0,3)})^2
\\
     + 4 z_2^{(3,0)} (z_2^{(1,2)})^3
     - 3 (z_2^{(2,1)})^2 (z_2^{(1,2)})^2)
\\
-144(x_2 y_3 - y_2x_3)^3 (x_3 y_5- y_3x_5) t_5^2
\\
- 108 ( (z_2^{(2,1)} z_2^{(0,3)} - (z_2^{(1,2)})^2) x_6^2
            - (z_2^{(3,0)} z_2^{(0,3)}
\\
            - z_2^{(2,1)} z_2^{(1,2)})x_6 y_6
            + (z_2^{(3,0)} z_2^{(1,2)} - (z_2^{(2,1)})^2)y_6^2  )t_6^2
\\
+ 432 (- (z_3^{(3,1)})^2 z_3^{(0,4)}
        + 2 z_3^{(3,1)} z_3^{(2,2)} z_3^{(1,3)}
\\
       - (z_3^{(2,2)})^3
       - z_3^{(4,0)} (z_3^{(1,3)})^2
       + z_3^{(4,0)} z_3^{(2,2)} z_3^{(0,4)})
\\
- 36(  z_3^{(4,0)} z_3^{(0,4)}
            - 4 z_3^{(1,3)} z_3^{(3,1)}
            + 3 (z_3^{(2,2)})^2) t_4^2
\\
+ 14 t_4^6
- 36 t_4^2 t_{10}^4
- 108 (z_6^{(2,0)} z_6^{(0,2)} - (z_6^{(1,1)})^2)t_6^4
+ 24 t_7^6
) = 0.
\end{array}
\end{equation}
We state the results as a theorem.

\begin{theorem}\label{computational results for degree 12}
The defining relations of degree $12$ of the pure trace algebra generated by two traceless
$4\times 4$ generic matrices form a $GL_2$-module isomorphic to $W(7,5)\oplus 2W(6,6)$.
The corresponding highest weight vectors
\[
v_{(7,5)}=0,\quad v'_{(6,6)}=0,\quad v''_{(6,6)}=0
\]
are given in {\rm (\ref{Phi-form of case 75})}, {\rm (\ref{Phi-form of first case 66})}, and
{\rm (\ref{Phi-form of second case 66})}.
\end{theorem}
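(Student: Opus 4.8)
The plan is to combine the already-known decomposition of $R_{12}$ as a $GL_2$-module with an explicit determination of its highest weight vectors, the latter carried out by the algorithms of Section 2 together with one linear-algebra computation. The isomorphism $R_{12}\cong W(7,5)\oplus 2W(6,6)$ is forced by the comparison of Hilbert series recorded in (\ref{decomposition of relations}), so it remains only to exhibit, for each $\lambda\in\{(7,5),(6,6)\}$, a basis of the space of $\lambda$-highest weight vectors lying in the kernel $J$ of the natural homomorphism $K[G_0]\to C_0$; by Corollary \ref{hwv as solutions of linear system} such a basis generates the full isotypic component of $W(\lambda)$ in $J$. Thus, once we produce one highest weight vector of bidegree $(7,5)$ and two of bidegree $(6,6)$ that genuinely vanish on the generic traceless matrices, the theorem follows.

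First I would run Algorithm \ref{algorithm for hwv} on the twelve highest weight vectors of the summands $W_1,\ldots,W_{12}$ of $G_0$ fixed in (\ref{order of modules}), with target partition $\lambda$. Step 1 produces homogeneous bases of the $W_i$ through Lemma \ref{algorithm for basis of module}, by applying the derivation $\Delta_1$ of (\ref{derivation for generation of module}) repeatedly and simplifying with the cyclic invariance of the trace. Step 2 forms all monomials in these basis elements of bidegrees $(\lambda_1,\lambda_2)$ and $(\lambda_1+1,\lambda_2-1)$; here $P=155$, $Q=119$ for $\lambda=(7,5)$ and $P=185$, $Q=155$ for $\lambda=(6,6)$. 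Solving the homogeneous system (\ref{homogeneous linear system}) obtained from $\Delta(w)=0$, with $\Delta$ the derivation of (\ref{derivation for hwv}) and using Lemma \ref{criterion for hwv}, yields the spaces of $\lambda$-highest weight vectors in $K[G_0]$, of dimensions $36$ and $30$; call bases of these $w_1,\ldots,w_{36}$ and $w_1,\ldots,w_{30}$. The block splitting of Remark \ref{simplification of computations} keeps the systems of manageable size.

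Next I would descend from $K[G_0]$ to $C_0$. Writing $w=\sum\zeta_i w_i$ with indeterminate scalars $\zeta_i$, I substitute the generic traceless matrices $x$ (diagonal, with only three free entries) and $y$ of Section 1, expand $w(x,y)=\sum_{p,q}w^{(p,q)}(x,y)e_{pq}$ as in (\ref{evaluation on traceless matrices}), and collect the coefficient of each monomial in the scalar entries $x_{aa},y_{b_1b_2}$; equating these coefficients to zero gives the homogeneous linear system (\ref{system for relations}) in the $\zeta_i$, whose solution space is by construction the space of $\lambda$-highest weight vectors of $J$ in degree $12$. The computation returns a one-dimensional solution space for $\lambda=(7,5)$ and a two-dimensional one for $\lambda=(6,6)$, matching the multiplicities of (\ref{decomposition of relations}); choosing representatives and rewriting them via the isomorphism $\Phi$ of (\ref{isomorphism of symmetric algebras}) yields exactly (\ref{Phi-form of case 75}), (\ref{Phi-form of first case 66}) and (\ref{Phi-form of second case 66}). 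Substituting these three vectors back into generic $4\times4$ matrices and checking that they vanish identically confirms both correctness and, with the dimension count, that no degree-$12$ relation has been missed.

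The main obstacle is computational rather than conceptual: the substitution step produces commutative polynomials of degree $12$ in roughly fifteen matrix entries, so the raw system (\ref{system for relations}) is enormous. The decisive simplifications are reducing $x$ to a traceless diagonal matrix up to conjugation, which cuts the number of $x$-variables to three; splitting the highest-weight-vector search into the smaller blocks $V(m_1,\ldots,m_k)$ of Remark \ref{simplification of computations}; and restricting attention to just the two relevant bidegrees dictated by (\ref{decomposition of relations}). With these in place the linear algebra is within reach of the standard functions of Maple.
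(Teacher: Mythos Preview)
Your proposal is correct and follows essentially the same route as the paper: use the Hilbert-series decomposition (\ref{decomposition of relations}) to fix the target multiplicities, run Algorithm \ref{algorithm for hwv} to produce bases of the $\lambda$-highest weight vectors in $K[G_0]$ (obtaining the same counts $P,Q,s$), then evaluate a generic linear combination on the traceless generic matrices and solve the resulting linear system (\ref{system for relations}) to pick out the kernel, finally rewriting the answers via $\Phi$. The only addition is your explicit back-substitution check, which is harmless.
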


The computational results for degree 13 and 14 are quite long expressions
and we shall discuss them in the next section.
Here for each $\lambda$ corresponding to a defining relation in
(\ref{decomposition of relations})
we give only the numbers $P$ and $Q$ of the vectors
(\ref{the products w}) and (\ref{the products v}),
the number $s$ of linearly independent
solutions of the system (\ref{homogeneous linear system}) and the multiplicity $r$ of $W(\lambda)$
from (\ref{decomposition of relations}):

\begin{equation}\label{results for relations of degree 13 and 14}
\begin{array}{c}
\lambda=(8,5): \quad P=203,\quad Q=136,\quad s=67,\quad r=1,\\
\\
\lambda=(7,6): \quad P=252,\quad Q=203,\quad s=49,\quad r=2,\\
\\
\lambda=(9,5): \quad P=284,\quad Q=188,\quad s=96,\quad r=2,\\
\\
\lambda=(8,6): \quad P=390,\quad Q=284,\quad s=106,\quad r=6,\\
\\
\lambda=(7,7): \quad P=418,\quad Q=390,\quad s=28,\quad r=2.\\
\end{array}
\end{equation}

\section{Conclusions}

The homogeneous system of parameters of $C_{42}$ found by
Teranishi \cite{T1, T2} contains all traces of degree $\leq 4$
and two elements of degree $(4,2)$ and $(2,4)$, respectively.
If in the system of Teranishi we remove $\text{tr}(X),\text{tr}(Y)$
and replace $X,Y$ with $x,y$, respectively, we obtain a homogeneous system of parameters
of the algebra $C_0$ generated by the generic traceless $4\times 4$ matrices $x$ and $y$.
Following \cite{DS}, we can choose for
a homogeneous system of parameters of $C_0$ any 13 trace polynomials which form a $K$-basis of
\[
W(2,0)\oplus W(3,0)\oplus W(4,0)\oplus W(2,2)=W_1\oplus W_2\oplus W_3\oplus W_4\subset G_0
\]
and two more trace polynomials
\[
\text{\rm tr}((xy-yx)^2x^2),\text{\rm tr}((xy-yx)^2y^2)\in W(4,2)=W_6\subset G_0.
\]
Hence in (\ref{CM-algebra}) we can choose for $u_{18},\ldots,u_{32}$ any basis of the direct sum
from (\ref{order of modules})
\[
W_5\oplus W_7\oplus W_8\oplus W_9\oplus W_{10}\oplus W_{11}\oplus W_{12}
\]
and the only homogeneous polynomial of degree $(3,3)$ in $W(4,2)=W_6$.
Using Lemma \ref{algorithm for basis of module} we construct
homogeneous bases $\{u_{i0},\ldots,u_{ia_i}\}$ of the modules $W_i=W(a_i+b_i,b_i)$ from
(\ref{order of modules}). In this notation, we fix
the homogeneous system of parameters of $C_0$ consisting of
\begin{equation}\label{homogeneous system of parameters}
\{u_{ij},u_{60},u_{62}\mid i=1,2,3,4,\ j=0,1,\ldots,a_i\}
\end{equation}
and complete it to a system of generators of $C_0$ by
\begin{equation}\label{other generators}
\{u_{ij},u_{61}\mid i=5,7,8,9,10,11,12, \ j=0,1,\ldots,a_i\}.
\end{equation}
It is easy to see that the finitely generated free $S$-module $C_0$, where
\begin{equation}\label{algebra S}
S=K[u_{ij}\mid i=1,2,3,4,\ j=0,1,\ldots,a_i,\ i=6,\ j=0,2],
\end{equation}
has a basis of the form
\begin{equation}\label{free generators as S-module}
B=\left\{\prod_{i,j} u_{ij}^{b_{ij}}\mid i=5,6,\ldots,12\right\},
\end{equation}
where the $u_{ij}$'s are from (\ref{other generators}) and
the $b_{ij}$'s belong to some set of indices. Hence every product of elements from
(\ref{other generators}) can be presented as a linear combination of elements in $B$ from
(\ref{free generators as S-module}) with coefficients in $S$. We shall use the defining relations
of degree 12,13, and 14, to give some restriction on the integers $b_{ij}$.

We start with the highest weight vectors $v_{(7,5)}, v'_{(6,6)}, v''_{(6,6)}$ from
(\ref{Phi-form of first case 66}), (\ref{Phi-form of case 75}), and
(\ref{Phi-form of second case 66}). The trace polynomial $v_{(7,5)}$ is of the form
\begin{equation}\label{first relation for free basis 75}
v_{(7,5)}=-24u_{50}u_{80}+4u_{60}u_{70}+\cdots,
\end{equation}
where $\cdots$ stays for the linear combination of products of the generators (\ref{other generators})
with coefficients which are polynomials in $S$, do not depend on $u_{60},u_{62}$, and are without constant term
(i.e., from the augmentation ideal $\omega(S)$ of $S$).
By Lemma \ref{algorithm for basis of module}, the $GL_2$-module generated by $v_{(7,5)}$
has a basis
\[
\left\{
v_{(7,5)},\frac{1}{2}\Delta_1(v_{(7,5)}),\frac{1}{2}\Delta_1^2(v_{(7,5)})
\right\}.
\]
Direct computations show that
\[
\begin{array}{c}
\Delta_1(v_{(7,5)})=-24(\Delta_1(u_{50})u_{80}+u_{50}\Delta_1(u_{80}))\\
\\
+4(\Delta_1(u_{60})u_{70}+u_{60}\Delta_1(u_{70})+\cdots,\\
\end{array}
\]
and, since $\Delta_1(u_{70})=0$,
\begin{equation}\label{second relation for free basis 75}
\frac{1}{2}\Delta_1(v_{(7,5)})=-24(u_{51}u_{80}+u_{50}u_{81})
+4u_{61}u_{70}+\cdots.
\end{equation}
Similarly
\begin{equation}\label{third relation for free basis 75}
\frac{1}{2}\Delta_1^2(v_{(7,5)})=-48u_{51}u_{81}
+4u_{62}u_{70}+\cdots.
\end{equation}
The equations (\ref{first relation for free basis 75}), (\ref{second relation for free basis 75}),
and (\ref{third relation for free basis 75}) imply that
\begin{equation}\label{relations for 75}
\begin{array}{c}
v_{(7,5)}\equiv -24u_{50}u_{80},\\
\\
\frac{1}{2}\Delta_1(v_{(7,5)})\equiv -24(u_{51}u_{80}+u_{50}u_{81})
+4u_{61}u_{70},\\
\\
\frac{1}{2}\Delta_1^2(v_{(7,5)})\equiv -48u_{51}u_{81}\\
\end{array}
\end{equation}
modulo $\omega(S)B$. In the same way, $v'_{(6,6)}, v''_{(6,6)}$ generate one-dimensional $GL_2$-modules
isomorphic to $W(6,6)$ and can be written in the form
\begin{equation}\label{relations for 66}
\begin{array}{c}
v'_{(6,6)}\equiv -6(u_{50}u_{81}-u_{51}u_{80}),\\
\\
v''_{(6,6)}\equiv 108u_{61}^2+24u_{70}^2\\
\end{array}
\end{equation}
modulo $\omega(S)B$. We order the trace polynomials from (\ref{other generators}) by
\begin{equation}\label{ordering of monomials}
u_{50}\succ u_{51}\succ u_{70}\succ u_{80}\succ u_{81}\succ u_{61}
\succ u_{90}\succ\cdots\succ u_{12,0},
\end{equation}
i.e., $u_{i_1j_1}\succ u_{i_2j_2}$ if $i_1<i_2$ or $i_1=i_2$, $j_1<j_2$, except the case
$u_{70}\succ u_{80}\succ u_{81}\succ u_{61}$.
Then we extend the order lexicographically on the products of (\ref{other generators}).
Hence, (\ref{relations for 75}) and (\ref{relations for 66}) give five relations such that,
modulo $\omega(S)B$, their leading monomials are
\begin{equation}\label{leading monomials of relations}
u_{50}u_{80},\quad u_{50}u_{81},\quad u_{51}u_{80},\quad u_{51}u_{81},\quad u_{70}^2.
\end{equation}
The defining relations of degree 13 and 14 which have been found in the same way as
the defining relations of degree 12 show that
the corresponding highest weight vectors are of the form
\begin{equation}\label{hwvs of degree 13}
\begin{array}{c}
v_{(8,5)}=u_{50}u_{90}-u_{60}u_{80}+\cdots,\\
\\
v'_{(7,6)}=4(u_{50}u_{91}-u_{51}u_{90})+(u_{60}u_{81}-u_{61}u_{80})+\cdots,\\
\\
v''_{(7,6)}=u_{50}u_{10,0}-2u_{70}u_{80}+\cdots\\
\end{array}
\end{equation}
\begin{equation}\label{hwvs of degree 14}
\begin{array}{c}
v'_{(9,5)}=2u_{50}u_{11,0}-u_{60}u_{90}+\cdots,\\
\\
v''_{(9,5)}=\cdots,\\
\\
v'_{(8,6)}=u_{60}u_{10,0}-2u_{70}u_{90}+\cdots,\\
\\
v''_{(8,6)}=u_{50}u_{11,1}-u_{51}u_{11,0}+u_{60}u_{91}-u_{61}u_{90}+\cdots,\\
\\
v'''_{(8,6)}=-7u_{60}u_{10,0}+12u_{70}u_{90}+12u_{80}^2+\cdots,\\
\\
v^{(4)}_{(8,6)}=\cdots,\quad v^{(5)}_{(8,6)}=\cdots,\quad v^{6}_{(8,6)}=\cdots,\\
\\
v'_{(7,7)}=-6(u_{60}u_{92}-2u_{61}u_{91}+u_{62}u_{90})
+u_{70}u_{10,0}+\cdots,\\
\\
v''_{(7,7)}=\cdots,\\
\end{array}
\end{equation}
with the same meaning of $\cdots$ as above. Applying several times the derivation $\Delta_1$
on the highest weight vectors from (\ref{hwvs of degree 13}) and (\ref{hwvs of degree 14})
we obtain that they generate irreducible $GL_2$-modules with bases which,
modulo $\omega(S)B$,
have leading monomials of the form
\begin{equation}\label{leading monomials of relations of degree 13}
\begin{array}{c}
u_{50}u_{90},\quad u_{50}u_{91},\quad u_{51}u_{90},\quad u_{50}u_{92},\quad u_{51}u_{91},
\quad u_{51}u_{92},\\
\\
u_{50}u_{10,0}, \quad u_{51}u_{10,0}.\\
\end{array}
\end{equation}
The leading monomials in the case of degree 14 are
\begin{equation}\label{leading monomials of relations of degree 14}
\begin{array}{c}
u_{50}u_{11,0},\quad u_{50}u_{11,1},\quad u_{51}u_{11,0},\quad
u_{50}u_{11,2},\\
\\
u_{51}u_{11,1},\quad u_{50}u_{11,3},
\quad u_{51}u_{11,2},\quad u_{51}u_{11,3},\\
\\
u_{70}u_{90},\quad u_{70}u_{91},\quad u_{70}u_{92},\\
\\
u_{70}u_{10,0},\quad u_{80}^2,\quad u_{80}u_{81},\quad u_{81}^2.\\
\end{array}
\end{equation}

\begin{theorem}
Let us fix the homogeneous system of parameters
{\rm (\ref{homogeneous system of parameters})} of $C_0$,
complete it to a system of generators by
{\rm (\ref{other generators})}, and let $S$ be defined in
{\rm (\ref{algebra S})}.
The finitely generated free $S$-module $C_0$
has a basis of the form
{\rm (\ref{free generators as S-module})} such
the products in $B$ do not contain factors
$u_{i_1j_1}u_{i_2j_2}$ from the lists given in
{\rm (\ref{leading monomials of relations}),
(\ref{leading monomials of relations of degree 13}),
(\ref{leading monomials of relations of degree 14})}.
\end{theorem}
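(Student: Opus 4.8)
The plan is to combine the explicit low-degree relations already computed with a Gr\"obner-basis style argument relative to the subalgebra $S$. First I would recall from the structure theory that $C_0$ is a finitely generated free $S$-module; this is a consequence of the Cohen--Macaulay property of $C_{nd}$ and the fact that the trace polynomials listed in (\ref{homogeneous system of parameters}) form a homogeneous system of parameters (together with $\operatorname{tr}(X),\operatorname{tr}(Y)$ for the full algebra $C$), as explained in \cite{DS}. Consequently, the set of all products $\prod u_{ij}^{b_{ij}}$ of the remaining generators (\ref{other generators}) spans $C_0$ as an $S$-module, and from this spanning set one may extract an $S$-basis $B$. The point of the theorem is to constrain $B$ by showing that the twelve ``leading monomials'' exhibited in (\ref{leading monomials of relations}), (\ref{leading monomials of relations of degree 13}), and (\ref{leading monomials of relations of degree 14}) may be excluded.

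The key step is the leading-term analysis. I would introduce the monomial order $\succ$ from (\ref{ordering of monomials}) on the products of the generators in (\ref{other generators}), extended lexicographically, and work modulo $\omega(S)B$, i.e.\ modulo the $S$-submodule generated by $\omega(S)$ times the spanning products. The relations of degree $12$, $13$, $14$ give, after applying Lemma~\ref{algorithm for basis of module} and the derivation $\Delta_1$ to the highest weight vectors in (\ref{Phi-form of case 75}), (\ref{Phi-form of first case 66}), (\ref{Phi-form of second case 66}), (\ref{hwvs of degree 13}), (\ref{hwvs of degree 14}), the reductions displayed in (\ref{relations for 75}), (\ref{relations for 66}) and their analogues, each of the shape $c\cdot(\text{monomial in }(\ref{other generators})) \equiv (\text{lower terms}) \pmod{\omega(S)B}$ with $c\in K^\times$. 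Thus each listed monomial, whenever it appears as a factor of a product in the spanning set, can be rewritten using strictly smaller products (in the lexicographic order) plus elements of $\omega(S)B$. Iterating this reduction — which terminates because $\succ$ is a well-order on monomials of bounded degree and the $\omega(S)$-filtration is exhausted in finitely many steps by finite generation — produces a spanning set none of whose members contains a forbidden factor. Extracting a basis $B$ from this reduced spanning set gives the desired conclusion.

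The main obstacle I anticipate is bookkeeping rather than conceptual: one must check that the reduction process is consistent, i.e.\ that rewriting a forbidden factor inside a larger product genuinely lowers the leading monomial and does not reintroduce a forbidden factor of equal or higher rank. This requires verifying that in each relation from (\ref{relations for 75}), (\ref{relations for 66}), (\ref{leading monomials of relations of degree 13}), (\ref{leading monomials of relations of degree 14}) the designated monomial is indeed the $\succ$-leading one among the terms not already in $\omega(S)B$ — which is exactly why the special clause $u_{70}\succ u_{80}\succ u_{81}\succ u_{61}$ was inserted into the order (\ref{ordering of monomials}), so that, e.g., $u_{70}^2$ in $v''_{(6,6)}$ and $u_{80}^2$ in $v'''_{(8,6)}$ outrank the competing quadratic terms. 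A secondary subtlety is that several relations of the same degree must be used together (Gaussian elimination among the $v$'s of a fixed $\lambda$) to isolate distinct leading monomials, as is already visible in the two relations for $\lambda=(6,6)$; one should note that the $s$ linearly independent highest weight vectors of each $R_{\lambda}$ span a space whose image modulo $\omega(S)B$ has exactly the claimed leading monomials. Once these order-theoretic checks are in place — and they are finite, explicit, and follow from the displayed forms — the theorem follows by the standard reduction argument.
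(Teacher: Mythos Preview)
Your proposal is correct and follows essentially the same argument as the paper: order the generators in (\ref{other generators}) via (\ref{ordering of monomials}), observe that each forbidden quadratic monomial is the $\succ$-leading term (modulo $\omega(S)C_0$) of some relation obtained from the highest weight vectors and their $\Delta_1$-derivatives, and replace any basis element containing such a factor by lower terms. The paper's proof is a three-sentence sketch of exactly this reduction, so your more careful discussion of termination and of the Gaussian elimination needed to separate leading monomials simply fills in details the authors leave implicit (your count of ``twelve'' leading monomials is off---there are $5+8+15=28$---but this does not affect the argument).
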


\begin{proof}
We order the elements (\ref{other generators}) by
(\ref{ordering of monomials}). The leading monomials
of the defining relations of degree 12, 13, and 14 are given
in (\ref{leading monomials of relations}),
(\ref{leading monomials of relations of degree 13}),
and (\ref{leading monomials of relations of degree 14}).
If the free generating set contains a monomial from these lists,
we can replace it by a linear combination
of monomials which are lower in the lexicographic order and
monomials from $\omega(S)C_0$.
\end{proof}

\begin{remark}
The generating function of the leading monomials from
(\ref{leading monomials of relations}),
(\ref{leading monomials of relations of degree 13}),
and (\ref{leading monomials of relations of degree 14})
is equal to $L(t,u)=L_{12}+L_{13}+L_{14}$, where
\[
L_{12}(t,u)=t^7u^5+3t^6u^6+t^5u^7=S_{(7,5)}(t,u)+2S_{(6,6)}(t,u),
\]
\[
L_{13}(t,u)=t^8u^5+3t^7u^6+3t^6u^7+t^5u^8=S_{(8,5)}(t,u)+2S_{(7,6)}(t,u),
\]
\[
L_{14}(t,u)=t^9u^5+4t^8u^6+5t^7u^7+4t^6u^8+t^5u^9
\]
\[
=S_{(9,5)}(t,u)+3S_{(8,6)}(t,u)+S_{(7,7)}(t,u).
\]
Comparing with the Hilbert series $H(R_i,t,u)$
of the defining relations $R_i$ of degree $i=12, 13, 14$,
from (\ref{decomposition of relations}), respectively,
we see that
$L_{12}(t,u)=H(R_{12},t,u)$, $L_{13}(t,u)=H(R_{13},t,u)$, and
\[
H(R_{14},t,u)-L_{14}(t,u)=
S_{(9,5)}(t,u)+ 3S_{(8,6)}(t,u)+ S_{(7,7)}(t,u).
\]
The explanation is the following. We multiply the trace polynomials
\[
v_{(7,5)},\quad \frac{1}{2}\Delta_1(v_{(7,5)}), \quad \frac{1}{2}\Delta_1^2(v_{(7,5)}),
\quad v'_{(6,6)},\quad v''_{(6,6)}
\]
 of degree 12 by the polynomials $u_{10},u_{11}\in W_1=W(2,0)\subset G_0$
and obtain linearly independent relations of degree 14 with generating
function which turns to be equal to the difference
$H(R_{14},t,u)-L_{14}(t,u)$.
Hence the new relations of degree 14, which cannot be obtained
from relations of lower degree, form a $GL_2$-module isomorphic to
\[
W(9,5)\oplus 3W(8,6)\oplus W(7,7).
\]
\end{remark}

\section*{Acknowledgements}

This project was started when the first author visited the
University of Bari. He is very grateful for the warm hospitality
and the creative atmosphere during his stay there.

\end{document}